\documentclass[reqno]{amsart}
\usepackage{tabularx}
\usepackage{halloweenmath}
\usepackage{cancel}
\usepackage{multirow}
\usepackage{fullpage}
\usepackage{array}
\usepackage{arydshln}
\usepackage{hhline}
\usepackage{dashbox}
\usepackage{todonotes}
\usepackage[all,2cell]{xy}
\usepackage[dvipsnames]{xcolor}
\usepackage{graphicx, amsmath, amssymb, amsthm}
\usepackage{newclude}
\usepackage{mathrsfs}
\usepackage{sseq}
\usepackage[hidelinks,backref]{hyperref}
\usepackage{enumitem}
\usepackage{stmaryrd}

\newcommand{\defeq}{\mathrel{:=}}
\let\sec=\S

\DeclareMathOperator{\Tr}{Tr}
\DeclareMathOperator{\tr}{tr}
\DeclareMathOperator{\RU}{RU}
\DeclareMathOperator{\Aut}{Aut}
\newcommand{\m}[1]{\underline{#1}}

\newcommand{\fade}[2][20]{{\color{black!#1}#2}}

\newtheorem{theorem}{Theorem}[section]
\newtheorem*{theorem*}{Theorem}

\newtheorem{lemma}[theorem]{Lemma}
\newtheorem{corollary}[theorem]{Corollary}

\theoremstyle{definition}
\newtheorem{definition}[theorem]{Definition}
\newtheorem{remark}[theorem]{Remark}

\newtheorem{example}[theorem]{Example}

\makeatletter
\providecommand\@dotsep{5}
\renewcommand{\listoftodos}[1][\@todonotes@todolistname]{%
  \@starttoc{tdo}{#1}}
\makeatother

\SelectTips{cm}{}
\UseAllTwocells
\NoCompileMatrices

\DeclareMathOperator{\MGS}{MGS}
\DeclareMathOperator{\Src}{Src}
\DeclareMathOperator{\Sub}{Sub}
\DeclareMathOperator{\dom}{dom}
\newcommand{\M}{\mathcal M}

\begin{document}

\title{Transfer systems give matroids only for cyclic $p$-groups}

\author[Y.~J.~F.~Sulyma]{Yuri~J.~F. Sulyma}
\email{yuri.sulyma@protonmail.com}
\begin{abstract}
  Transfer systems as studied in equivariant algebra admit minimal generating sets, analogous to bases in linear algebra. It is natural to wonder if minimal generating sets form a matroid. We show that this happens only for lattices which are linear orders, or for cyclic groups of prime power order. In this case, we compute some invariants of the resulting matroid.
\end{abstract}

\maketitle

\section{Introduction}

Equivariant algebra and equivariant homotopy theory study generalizations of the \emph{trace} and \emph{norm} maps
\[
  \tr(f) = \sum_{g\in G} g\cdot f,
  \qquad
  N(f) = \prod_{g\in G} g\cdot f
\]
from Galois theory. For example, for a subgroup $H\le G$ there are \emph{transfer} and \emph{norm} maps between the (complex) representation rings $\RU(H)\to\RU(G)$ given by
\[ \tr(V) = V^{\oplus{\Aut(G/H)}}, \qquad N(V) = V^{\otimes{\Aut(G/H)}}, \]
which have similar formal properties, but do not come from a group action. A collection of rings $\m R(G/H)_{H\le G}$ satisfying the same properties as $H\mapsto A^H$ (when $G$ acts on a ring $A$ through ring homomorphisms) or $H\mapsto\RU(H)$ is called a \emph{Tambara functor}, c.f.\ \cite{StricklandTambara} for precise definitions.

While much of commutative algebra carries over, the theory of localization is considerably more subtle. In particular, a fundamental kind of localization is to kill a subgroup (and all transfers generated by it), but this operation does not preserve norms. Concretely, the norm map $N^6_2\colon \RU(C_2) \to \RU(C_6)$ does not pass to a norm map
\[ \tilde N^6_2\colon \RU(C_2)/\tr(\RU(e)) \to \RU(C_6)/\tr(\RU(C_e)); \]
in fact, the only ones that survive are
\[ \tilde N^{p^{m+n}}_{p^n} \colon \RU(C_{p^n})/\tr(\RU(e)) \to \RU(C_{p^{m+n}})/\tr(\RU(e)) \]
for a prime $p$. This led Blumberg and Hill to introduce \emph{incomplete} Tambara functors \cite{BHOperadic,IncompleteTambara}, where we only have norms for a fixed collection of subgroup inclusions $\{K\le H\}$\footnote{However, in \cite{PrismsTambara1} we observed that there \emph{is} an induced map $\tilde N^6_2\colon \RU(C_2)/\tr(\RU(e)) \to \RU(C_6)/\tr(\RU(C_3))$; the existing framework of incomplete Tambara functors does not account for this map.}.

The collection of partial norm structures that are possible produces very rich combinatorics. We now know \cite{NooAssociahedra,BPGenuineOperads,GutierrezOperads,RubinNoo} that they correspond to \emph{transfer systems} on the lattice $\Sub(G)$.

\begin{definition}
  A \emph{transfer system} on $\Sub(G)$ is a sub-poset $T\le\Sub(G)$ such that
  \begin{itemize}
    \item if $(K\le H)\in T$, then $(gKg^{-1}\le gHg^{-1})\in T$ for all $g\in G$;
    \item if $(K\le H)\in T$ and $J\le G$, then $(K\cap J \le H\cap J)\in T$.
  \end{itemize}
  We write $\Tr(G)$ for the poset of transfer systems on $G$. One can similarly define $\Tr(P)$ for any poset $P$ with $G$-action.
\end{definition}

For a poset $P$, we write $P^\to$ for the collection of nontrivial edges of $P$. Given a collection of subgroup inclusions $\{K\subset H\}\subset \Sub(G)^\to$, there is a minimal transfer system containing them. This can be computed explicitly by \emph{Rubin's algorithm} \cite{RubinAlgo}. Conversely, given a transfer system $T$, we can ask for a \emph{minimal generating set} of $T$. Adamyk-Balchin-Barrerro-Scheirer-Wisdom-Zapata Castro \cite{MinimalTransferBases} study this in detail, showing how to compute a minimal generating set for a transfer system by running Rubin's algorithm in reverse. In particular, they show that every minimal generating set of a transfer system has the same cardinality, as is the case for bases of a vector space.

The edges in a minimal generating set are in some sense ``independent''. To unlock more tools for understanding these, it is useful to compare with other notions of ``independence''. \emph{Matroids} are gadgets studied in combinatorics and computer science which abstract the notion of linear independence.

\begin{definition}
    An \emph{independence system} is a pair $(E,I)$ with $E$ a finite set and $I\subset\mathcal P(E)$ a collection of subsets, called \emph{independent} subsets. There are required to satisfy
    \begin{enumerate}
    \item the empty set is independent, i.e.\ $\emptyset\in I$;
    \item every subset of an independent set is independent, i.e.\ $A\subset B\in I\implies B\in I$.
    \end{enumerate}
    An independence system $(E,I)$ is a \emph{matroid} if it further satisfies
    \begin{enumerate}
    \setcounter{enumi}2
    \item if $A,B\in I$ with $|A|<|B|$, then there is some $b\in B\setminus A$ such that $A\cup\{b\}\in I$.
    \end{enumerate}
\end{definition}

\begin{example}
  \label{ex:linear-matroids}
  Let $k$ be a field, let $V$ be a vector space over $k$, and let $E$ be a finite subset of $V$. Let $I$ denote the collection of linearly independent subsets of $E$. Then the pair $(E,I)$ forms a matroid; matroids which arise in this way are called \emph{$k$-linear matroids}.
\end{example}

We would like to try to fit minimal generating sets into this notion of independence.

\begin{definition}
Let $L$ be a finite lattice, let $L^\to$ be the set of nontrivial edges of $L$, and $\MGS(L)\subset\mathcal P(L^\to)$ the collection of minimal generating sets. We say that $L$ is \emph{matroidal} if the pair $(L^\to,\MGS(L))$ is a matroid.
\end{definition}

The pair $(L^\to,\MGS(L))$ is always an independence system, but not always a matroid. In this note we show:

\begin{theorem}
    \label{thm:matroidal}
   A finite lattice is matroidal if and only if it is a linear order, i.e.\ $L\cong[n]$ for some $n$.
\end{theorem}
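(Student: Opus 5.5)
The plan is to prove both directions directly from the definitions. Throughout, a transfer system on $L$ (with trivial action) is a sub-poset of $L$ closed under restriction, $(K\le H)\mapsto(K\wedge J\le H\wedge J)$. A set $S\subset L^\to$ is independent exactly when no edge of $S$ lies in the transfer system generated by the remaining edges.

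\emph{Non-linear lattices are not matroidal.} If $L$ is not a chain, pick incomparable $a,b\in L$ and set $m=a\wedge b$, $M=a\vee b$; then $m<a<M$ and $m<b<M$. I will compare
\[
A=\{m\to M\}\qquad\text{and}\qquad B=\{m\to a,\ m\to b\}.
\]
\begin{itemize}
\item $A$ is independent, since it is a single nontrivial edge.
\item $B$ is independent. Every restriction of $m\to a$ has target $a\wedge J\le a$, so it can never be $m\to b$ with $b\not\le a$. Transitive composites of such edges also stay below $a$. Hence $m\to b$ is not generated by $m\to a$, and symmetrically $m\to a$ is not generated by $m\to b$.
\item The exchange axiom fails. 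Restricting $m\to M$ along $J=a$ gives $m\wedge a\to M\wedge a$, which is $m\to a$; restricting along $J=b$ gives $m\to b$. So both $A\cup\{m\to a\}$ and $A\cup\{m\to b\}$ contain a redundant edge and are not independent.
\end{itemize}
Since $|A|<|B|$, axiom (3) fails, and $L$ is not matroidal.

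\emph{Linear orders are matroidal.} Let $L=[n]=\{0<1<\dots<n\}$. The key step is to describe restriction explicitly: for $i<j$ and any $J$, the restriction of $i\to j$ along $J$ is
\[
i\to j \ (J\ge j),\qquad i\to J \ (i<J<j),\qquad \text{trivial}\ (J\le i).
\]
So the transfer system generated by $S$ consists of the composites of chains $a_0<a_1<\dots<a_r$ in which each step $a_t\to a_{t+1}$ comes from a generator with source $a_t$ and target $\ge a_{t+1}$. I would verify that this set is already closed under restriction and transitivity.

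I then claim that $S$ is independent if and only if the edges of $S$ have pairwise distinct sources.
\begin{itemize}
\item If $(i\to j),(i\to j')\in S$ with $j<j'$, then $i\to j$ is the restriction of $i\to j'$ along $J=j$, so $S$ is not independent.
\item Suppose the sources are distinct and $i\to j\in S$ is generated by $S\setminus\{i\to j\}$. Then there is a chain as above starting at $a_0=i$. Its first step requires a generator with source $i$, and the only one is $i\to j$ itself, a contradiction.
\end{itemize}
Therefore the independent sets are exactly the partial transversals of the partition of $[n]^\to$ by source $i\in\{0,\dots,n-1\}$. This is a partition matroid, and the exchange axiom is immediate: if $|A|<|B|$, some source used by $B$ is unused by $A$, and the corresponding edge of $B$ can be added to $A$.

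The main obstacle I expect is the explicit description of the generated transfer system on $[n]$: I need to confirm that closure under restriction introduces nothing beyond the chains described. Everything else is short.
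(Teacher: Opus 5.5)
Your proposal is correct and follows the paper's structure. For linear orders you argue as in Lemma~\ref{lem:axiom-3-linear}: same-source edges are redundant by restriction, and generation on $[n]$ never creates new sources, so the independent sets are the partial transversals of the partition by source. Your chain description and the partition-matroid identification simply spell out the paper's terse final step.

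The only genuine difference is the witness pair for non-linear lattices. You use $\{m\to M\}$ against $\{m\to a, m\to b\}$, whereas the paper uses $\{m\to a\}$ against $\{m\to M, b\to M\}$. Both fail exchange for the same reason: a restriction along $a$ or $b$ makes an edge redundant. Your choice has the small advantage that you actually verify independence of the two-element set, which the paper only asserts.
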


\begin{remark}
    Given a finite group $G$, we have $\Sub(G)\cong[n]$ if and only if $G\cong C_{p^n}$. Indeed, a finite group with a unique maximal subgroup must be a cyclic $p$-group.
\end{remark}

Transfer systems on $C_{p^n}$ are studied in detail in \cite{NooAssociahedra}, where it is shown that they correspond to points of the associahedron.

While Theorem \ref{thm:matroidal} is a bit disappointing, it is significant in that gives a concrete connection between properties of transfer systems on $L$ and lattice-theoretic properties of $L$. It also gives us two new directions to pursue:

\begin{itemize}
    \item what are the matroid-theoretic properties of the matroid
    \[ \M_n\defeq([n]^\to,\MGS([n]))? \]
    For example, is it $k$-linear for some $k$? There are a whole bunch of matroid invariants we can calculate for this. Can some of these be generalized to arbitrary lattices? This is studied in \sec\ref{sub:lt-matroids}.

    \item there are some generalizations of matroids in the literature. Can any of these accommodate more examples? If not, can we introduce a new generalization which does?
\end{itemize}

\subsection{Acknowledgments}

This research was initiated at the AMS funded MRC on Homotopical Combinatorics, which was partially supported by NSF grant 1916439. We thank the AMS and the organizers of this event for introducing us to this subject. In particular, we thank Katharine Adamyk, Scott Balchin, Miguel Barrero, Steven Scheirer, Noah Wisdom, and Valentina Zapata Castro for conversations related to this work.

\section{Only linear orders give matroids}

The proof of Theorem \ref{thm:matroidal} is fairly short, but requires some lemmas and notation.

\begin{lemma}
  \label{lem:matroidal-implies-linear}
  If $L$ is matroidal, then $L$ is a linear order.
\end{lemma}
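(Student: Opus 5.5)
The plan is to prove the contrapositive. If $L$ is not a linear order, I will exhibit two independent sets $A,B$ with $|A|<|B|$ for which the exchange axiom fails.

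Here I read a transfer system on a lattice $L$ as a reflexive, transitive sub-relation of $\le$ that is closed under restriction: if $(x\le y)\in T$ then $(x\wedge z\le y\wedge z)\in T$ for all $z\in L$. A set of edges is independent when it is a minimal generating set of the transfer system it generates.

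The description of generated transfer systems I need is the one underlying Rubin's algorithm \cite{RubinAlgo}. The transfer system generated by a set $S$ of edges is the reflexive-transitive closure of the set of all restrictions $\{(x\wedge z\le y\wedge z) : (x\le y)\in S,\ z\in L\}$. This works because the restriction of a composite is the composite of the restrictions, and restricting twice is a single restriction since $(x\wedge z)\wedge z' = x\wedge(z\wedge z')$.

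Since $L$ is not linear, I choose incomparable $a,b\in L$ and set $c=a\vee b$ and $d=a\wedge b$. Incomparability gives $d<a<c$ and $d<b<c$. The two sets are
\[ A=\{d\to c\},\qquad B=\{d\to a,\ d\to b\}. \]

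\textbf{$A$ is independent.} It is a single nontrivial edge, so this is clear.

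\textbf{$B$ is independent.} I must show neither edge generates the other; by symmetry it suffices to show $d\to a$ does not generate $d\to b$. Every restriction of $d\to a$ has the form $d\wedge z\le a\wedge z$, whose top lies below $a$. Hence every edge in the generated transfer system (a composite of such restrictions, or an identity) has top $\le a$ or is trivial. Since $b\not\le a$, the nontrivial edge $d\to b$ is not generated.

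\textbf{Exchange fails.} Restricting $d\to c$ along $a$ gives $d\wedge a\le c\wedge a$, which is $d\le a$. Likewise, restricting along $b$ gives $d\le b$. So $A\cup\{d\to a\}$ is not a minimal generating set: the edge $d\to a$ is redundant, as $A$ alone generates the same transfer system. The same holds for $A\cup\{d\to b\}$. Thus no $e\in B\setminus A$ makes $A\cup\{e\}$ independent, axiom (3) fails, and $L$ is not matroidal.

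The only step I expect to need care is the independence of $B$, which relies on the explicit closure description. If I want to avoid that description, I can instead argue directly: the set of all edges $x\le y$ with $y\le a$ (together with identities) is itself a transfer system, since it is transitive and closed under restriction. It contains $d\to a$ but not $d\to b$, and this already suffices.
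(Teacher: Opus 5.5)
Your proof is correct and is essentially the paper's argument: for incomparable $a,b$ you work in the square on $a\wedge b$, $a$, $b$, $a\vee b$, and you exhibit a one-edge and a two-edge minimal generating set for which both exchanges fail, because restricting along $a$ or $b$ regenerates a bottom edge. The only difference is the choice of sets. In your notation the paper takes $A=\{d\to a\}$ and $B=\{d\to c,\ b\to c\}$, whereas your choice $A=\{d\to c\}$, $B=\{d\to a,\ d\to b\}$ lets you certify that $B$ is independent using the simple ``target $\le a$'' transfer system; the paper only asserts its corresponding independence claim, on the grounds that ``none of the arrows are identities.''
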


\begin{proof}
    Let $x,y\in L$, and consider the diagrams
    \[
    \vcenter{
        \xymatrix{
          x\land y \ar[r] \ar@{->>}[d] & y \ar[d]\\
          x \ar[r] & x\lor y
        }
    }
    \qquad
    \vcenter{
        \xymatrix{
          x\land y \ar[r] \ar[d] \ar@{->>}[dr] & y \ar@{->>}[d]\\
          x \ar[r] & x\lor y
        }
    }
    \]
    If $x$ and $y$ are incomparable, then none of the arrows are identities, so each set of marked arrows is a minimal generating set. However, adding either of the transfers from the second diagram to the first will produce a non-minimal generating set, contradicting axiom (3). Thus, we must have either $x\le y$ or $y\le x$.
\end{proof}

\begin{definition}
    Let $S\subset L^\to$ be a collection of (non-degenerate) edges of $L$. We write
    \[ \Src(S) = \{x \mid (x \to y) \in S\text{ for some }y\in L\} \]
    and call $\Src(S)$ the set of \emph{sources} of $S$. Equivalently, this is the image of $S$ under the domain map $\dom\colon L^\to\subset L^{[1]} \to L$.
\end{definition}

\begin{lemma}
  \label{lem:axiom-3-linear}
  Let $X$ be a transfer system on $[n]$, and let $S$ be a minimal generating set for $X$. Then we have $\Src(X)=\Src(S)$ and $|S|=|\Src(S)|$: that is, $S$ must consist of exactly one edge for each source in $X$.
\end{lemma}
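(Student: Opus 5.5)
The plan is to first make explicit what the transfer system axioms say on the linear order $[n]$, and then derive both claims from one observation: on $[n]$, restriction and composition never create edges with new sources. Since $[n]$ carries no group action, only restriction and transitivity matter. For a nontrivial edge $i\to j$ (so $i<j$) and any $k\in[n]$, restriction along $k$ gives $\min(i,k)\to\min(j,k)$. This is the identity when $k\le i$, the edge $i\to k$ when $i<k<j$, and $i\to j$ itself when $k\ge j$. So restriction of $i\to j$ produces exactly the edges $i\to k$ with $i<k\le j$. In particular every nontrivial edge obtained by restriction has the same source $i$.

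To show $\Src(X)=\Src(S)$, note first that $S\subset X$, so $\Src(S)\subset\Src(X)$. For the reverse inclusion, I would use that $X$ is the smallest transfer system containing $S$. It can therefore be built from $S$ by repeatedly adding restrictions and composites, as in Rubin's algorithm \cite{RubinAlgo}, together with identities. I would induct along this process, proving that every nontrivial edge obtained has source in $\Src(S)$. Restrictions preserve the source or give an identity, as computed above. A composite $i\to j\to k$ with $i<k$ has source $i$. If $i<j$, then $i$ is the source of the nontrivial edge $i\to j$, which lies in $\Src(S)$ by induction. If $i=j$, the composite is just $j\to k$. Equivalently, one can check directly that the set of edges whose source lies in $\Src(S)$, together with identities, is already closed under restriction and composition.

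For $|S|=|\Src(S)|$, it suffices to show that the domain map $\dom\colon S\to\Src(S)$ is injective. Suppose $S$ contains two distinct edges $i\to j$ and $i\to j'$ with $j<j'$. By the computation above, $i\to j$ is the restriction of $i\to j'$ along $j$. Hence $S\setminus\{i\to j\}$ generates a transfer system containing $i\to j$, and therefore generates all of $X$. This contradicts the minimality of $S$. So $S$ has exactly one edge per source, and combining this with the first part gives exactly one edge for each source in $X$.

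The main obstacle is only the bookkeeping in the first step: justifying that "generated" means closure under restriction and composition. Once one accepts the iterative description, or simply checks that the candidate set is a transfer system containing $S$, the remaining steps are a direct computation on the linear order.
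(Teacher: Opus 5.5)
Your proof is correct and takes the same route as the paper: restricting $i\to j'$ along $j$ gives $i\to j$, so a minimal generating set has at most one edge per source, and on $[n]$ neither restriction nor composition ever creates an edge with a new source, so $\Src(X)=\Src(S)$. Your version is slightly more careful than the paper's, which writes ``$(i\to k)\in S\implies(i\to j)\in S$'' where it means the generated system and attributes the edges $i\to j$ with $j\le k$ to the composition rule rather than to restriction; your direct check that the edges with source in $\Src(S)$, together with identities, already form a transfer system also makes the generation step rigorous.
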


\begin{proof}
  Let $i\le j<k\le n$. The restriction rule gives
  \[ (i\to k) \in S \implies (i\to j)\in S \]
  so no minimal generating set can contain both $i\to j$ and $i\to k$. This shows that $|S|=|\Src(S)|$.

  Next let's examine how the transfer system $X$ gets generated by $S$.
  \begin{itemize}
      \item if $(i\to k)\in S$, then the composition rule gives $(i\to j)\in X$ for all $i\le j\le k$. Note this only adds more edges with source $i$.
      \item if $(i\to j),(j\to k)\in X$, then $(i\to k)\in X$. Again, this only adds edges with source $i$. Note that the edges $(i\to j),(j\to k)$ may not be in $S$, but they are generated by applications of this operation and the previous one, so still cannot add new sources.
  \end{itemize}
  From this we see that $\Src(X)=\Src(S)$.
\end{proof}

\begin{proof}[Proof of Theorem \ref{thm:matroidal}]
  Lemma \ref{lem:matroidal-implies-linear} shows that matroidal lattices are linear orders, so we just need to show that every linear order is matroidal. As noted above, only axiom (3) needs to be checked. So let $A,B\in\MGS(L)$ with $|A|<|B|$. By Lemma \ref{lem:axiom-3-linear}, we have
  \[ |\Src(A)| = |A| < |B| = |\Src(B)|, \]
  so there some $(b\to b')\in B$ with $b\notin\Src(A)$. It follows from the other part of Lemma \ref{lem:axiom-3-linear} that $A\cup\{(b\to b')\}$ is a minimal generating set, which verifies (3).
\end{proof}

\begin{remark}\label{rmk:rank}
  It follows that the rank function of the matroid is given by $r(X) = |\Src(X)|$ for $X\subset[n]^\to$.
\end{remark}

\section{Matroid invariants}
\label{sub:lt-matroids}

%

Let $\M_n=([n]^\to,\MGS([n]))$ be the matroid associated to $[n]$ by Theorem \ref{thm:matroidal}. We call $\M_n$ a \emph{linear transfer matroid}; beware that this is not necessarily a linear matroid in the sense of Example \ref{ex:linear-matroids}. In this section we study some properties and invariants of $\M_n$.

\begin{definition}
    The \emph{chromatic polynomial} of a matroid $(E,I)$ is given by
    \[ p_E(t) = \sum_{S\subseteq E} (-1)^{|S|} t^{r(E)-r(S)} \]
    where $r$ is the rank function, i.e.\ size of smallest independent subset.
\end{definition}

\begin{definition}
    The \emph{Tutte polynomial} of a matroid $(E,I)$ is given by
    \[ T_E(x, y) = \sum_{S\subseteq E} (x-1)^{r(E)-r(S)}(y-1)^{|S|-r(S)}. \]
\end{definition}

The Tutte polynomial contains strictly more information than the chromatic polynomial, since they are related by
\[ p_E(t) = (-1)^{r(E)} T_E(1-t, 0). \]

\begin{theorem}
The Tutte polynomial of $\M_n$ is
\[ T_{\M_n}(x, y) = \prod_{k=1}^n (x-1+(k)_y) \]
where $(k)_y \defeq \frac{y^k-1}{y-1}$.
\end{theorem}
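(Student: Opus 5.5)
Throughout, $[n]=\{0<1<\dots<n\}$, so the sources of edges are $0,\dots,n-1$ and source $i$ has exactly $n-i$ outgoing nontrivial edges. The plan is to use Remark~\ref{rmk:rank}: for $S\subseteq[n]^\to$ we have $r(S)=|\Src(S)|$. This means $\M_n$ is a direct sum of matroids, one for each source $i$. The summand for source $i$ has ground set the edges $E_i=\{i\to j : i<j\le n\}$, with $|E_i|=n-i$. In it a subset is independent exactly when it has at most one element. So that summand is the uniform matroid $U_{1,n-i}$, and $\M_n\cong\bigoplus_{k=1}^n U_{1,k}$. To justify this, note that any $S\subseteq[n]^\to$ decomposes uniquely as $S=\bigsqcup_i S_i$ with $S_i\subseteq E_i$. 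The rank is additive, $r(S)=\sum_i r(S_i)$ with $r(S_i)=1$ if $S_i\neq\emptyset$ and $0$ otherwise. Also $|S|=\sum_i|S_i|$ and $r(E)=n$.

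Next I would plug this into the definition of the Tutte polynomial. Since both exponents $r(E)-r(S)$ and $|S|-r(S)$ are additive over the decomposition, the sum over $S$ factors as a product over $i$ of sums over $S_i\subseteq E_i$. This is the standard multiplicativity of the Tutte polynomial under direct sums, but here it is a one-line rewriting of the defining sum, so no external citation is needed. Thus
\[ T_{\M_n}(x,y)=\prod_{k=1}^n T_{U_{1,k}}(x,y). \]

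The last step is to compute $T_{U_{1,k}}$ directly. The empty set contributes $(x-1)^{1}(y-1)^0=x-1$. Each nonempty $S'\subseteq E_i$ of size $m\ge1$ has rank $1$, so it contributes $(y-1)^{m-1}$, and there are $\binom km$ such sets. Therefore
\[ T_{U_{1,k}}=x-1+\sum_{m=1}^k\binom km (y-1)^{m-1}=x-1+\frac{(1+(y-1))^k-1}{y-1}=x-1+(k)_y, \]
which gives the stated product.

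I expect no real obstacle. The only point needing care is the bookkeeping for the direct-sum decomposition: one must check that the rank really splits as a sum of indicator functions over sources, and this is exactly what Remark~\ref{rmk:rank} provides. One must also index correctly so that the edge counts $n-i$ for $i=0,\dots,n-1$ run over $k=1,\dots,n$.
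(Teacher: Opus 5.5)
Your proof is correct and is essentially the paper's argument. The paper inducts on $n$ by splitting off the edges with source $0$, and that contribution $(x-1)+\sum_{\emptyset\ne F\subset A}(y-1)^{|F|-1}$ is exactly $T_{U_{1,n+1}}$; you instead split off every source block at once via $\M_n\cong\bigoplus_{k=1}^n U_{1,k}$, using multiplicativity of the Tutte polynomial over direct sums (which also explains why the paper's remark says to induct from the bottom rather than the top: the edges into the top element cut across the source blocks, so rank is not additive along that split).
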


\begin{proof}
  The base case $n=0$ is trivial. For the induction step, we write $\{0,\dotsc,n+1\}=\{0\}\cup\{1,\dotsc,n+1\}$; we invite the reader to see what goes wrong if one tries to induct via $\{0,\dotsc,n\}\cup\{n+1\}$.

  Let $S$ be a subset of $[n+1]^\to$, and let $S'$ denote its restriction to $([n]')^\to$, where $[n]'=\{1\le\dotsc\le n+1\}$. Write $A=[n+1]^\to - ([n]')^\to$ for the set of edges in $[n+1]$ with source $0$. We distinguish two cases:
  \begin{itemize}
      \item If $S=S'$, then $r(S)=r(S')$ and $|S|=|S'|$. 
      
      \item If $S=S'\cup F$ for some $\emptyset\ne F\subset A$, then $r(S)=r(S')+1$ and $|S|=|S'|+|F|$. 
  \end{itemize}
  Consequently, we have
  \begin{align*}
    \frac{T_{\M_{n+1}}(x, y)}{T_{\M_n}(x,y)}
      &= (x - 1) + \sum_{\emptyset\ne F\subset A} (y-1)^{|F|-1}\\
      &= \fade{(x - 1) +{}} \frac{\left(\sum_{F\subset A}(y-1)^{|F|}\right)-1}{y-1}\\
      &= \fade{(x - 1) +{}} \frac{((y-1)+1)^{|A|}-1}{y-1}\\
      &= \fade{(x - 1) +{}} (n+1)_y \qedhere
  \end{align*}
\end{proof}

\begin{corollary}
  The chromatic polynomial of $\M_n$ is
  \[ p_{\M_n}(t) = (t-1)^n. \]
\end{corollary}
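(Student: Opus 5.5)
The plan is to specialize the Tutte polynomial formula just proved, using the stated relation $p_E(t) = (-1)^{r(E)} T_E(1-t,0)$. Two ingredients are needed: the rank $r(\M_n)$ of the whole ground set, and the evaluation of the product formula at $x=1-t$, $y=0$.

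First, the rank. By Remark~\ref{rmk:rank}, $r(X)=|\Src(X)|$. In $[n]=\{0,\dotsc,n\}$, every element except the top element $n$ is the source of some nontrivial edge. Hence $\Src([n]^\to)=\{0,\dotsc,n-1\}$, and so $r(\M_n)=n$.

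Second, the evaluation. For $k\ge 1$ we have $(k)_y = 1+y+\dotsb+y^{k-1}$, which is a polynomial in $y$; in particular it is defined at $y=0$, where it equals $1$. So each factor $x-1+(k)_y$ at $x=1-t$, $y=0$ becomes $(1-t)-1+1 = 1-t$. This gives
\[ T_{\M_n}(1-t,0) = (1-t)^n. \]
Combining the two steps,
\[ p_{\M_n}(t) = (-1)^n(1-t)^n = (t-1)^n. \]

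The only delicate point is that $(k)_y$ is written as a quotient with denominator $y-1$. One must read it as the polynomial $1+\dotsb+y^{k-1}$; the closed form also gives $1$ at $y=0$, so either reading works. As a sanity check I would verify small cases directly from the definition. For $n=1$ there is a single edge of rank $1$, so $p(t)=t-1$.
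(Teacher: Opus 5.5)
Your proposal is correct and follows the paper's intended route. The paper states the corollary without a written proof, as an immediate consequence of the Tutte polynomial formula and the relation $p_E(t)=(-1)^{r(E)}T_E(1-t,0)$; your evaluation, using $r(\M_n)=n$ and $(k)_0=1$, is exactly that specialization.
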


Finally, we compute the dual of $\M_n$. Recall that the dual $M^*$ of a matroid $M$ has the same elements of $M$, and the basis sets of $M^*$ are defined to be the complements of the basis sets of $M$.

\begin{theorem}
  Let $X_{i\to}=\{e \in X \mid \Src(e) = i\}$.
  A subset $X\subset\M_n^*$ is independent if and only if $ |X_{i\to}| < n-i $ for each $0\le i<n$. It is a basis set if and only if $|X_{i\to}| = n-i-1$ for each $0\le i<n$.
\end{theorem}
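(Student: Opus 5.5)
We first identify the bases of $\M_n$ and then take complements. By Lemma \ref{lem:axiom-3-linear} and Remark \ref{rmk:rank}, the rank function of $\M_n$ is $r(X)=|\Src(X)|$. Hence a subset $B\subset[n]^\to$ is a basis of $\M_n$ exactly when it contains one edge out of each possible source $i$ with $0\le i<n$. Indeed, every such $i$ is a source, since the edge $i\to n$ exists, and the full edge set has rank $n$. In other words, $\M_n$ is the direct sum over $i$ of the uniform matroids $U_{1,n-i}$ on the edge sets $E_i=\{i\to j\mid i<j\le n\}$, each of which has $|E_i|=n-i$. We would state this direct sum description explicitly. It is the key structural observation, and it follows because independence of $X$ means $|X|=|\Src(X)|$, i.e.\ $|X_{i\to}|\le 1$ for each $i$.

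Next we describe the bases of the dual. A basis of $\M_n^*$ is the complement $[n]^\to\setminus B$ of a basis $B$ of $\M_n$. Since $B$ meets each $E_i$ in exactly one edge, its complement $X$ satisfies $|X_{i\to}|=|E_i|-1=n-i-1$. Conversely, any $X$ with $|X_{i\to}|=n-i-1$ for all $i$ omits exactly one edge from each $E_i$. Its complement therefore has one edge per source, so it is a basis of $\M_n$. This proves the basis statement.

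Finally we describe the independent sets of the dual. A set is independent in $\M_n^*$ iff it is contained in some basis of $\M_n^*$. If $X\subset Y$ with $Y$ a dual basis, then $|X_{i\to}|\le n-i-1<n-i$. Conversely, suppose $|X_{i\to}|<n-i=|E_i|$ for each $i$. Then each $E_i\setminus X_{i\to}$ is nonempty, so we may choose one edge $e_i$ in it. The set $\{e_i\}$ is a basis of $\M_n$ disjoint from $X$, and so $X$ lies in its complement, which is a dual basis. Equivalently, $X$ is co-independent iff $[n]^\to\setminus X$ spans $\M_n$, i.e.\ has all $n$ sources. This is the same condition.

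We do not expect a real obstacle. The only step needing care is the direct sum description. It is cleanest to get it by using Remark \ref{rmk:rank} to say that $X$ is independent iff $r(X)=|X|$, rather than by going back through generating sets.
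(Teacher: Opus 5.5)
Your proof is correct and follows the paper's route: you read off the bases of $\M_n$ (one edge out of each source $0\le i<n$) from $r(X)=|\Src(X)|$, then take complements to get $|X_{i\to}|=n-i-1$. The paper leaves the independence half implicit, and your explicit argument (choosing $e_i\in E_i\setminus X_{i\to}$ to produce a basis of $\M_n$ disjoint from $X$) fills that gap correctly.
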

\begin{proof}
  By Remark \ref{rmk:rank}, the basis sets of $\M_n$ are those subsets $X\subset [n]^\to$ containing exactly one edge of source $i$ for each $0\le i<n$. Therefore, a basis set of $\M_n^*$ must contain all but one of the $n-i$ edges emanating from $i$.
\end{proof}

\begin{remark}
  In \cite{TransferSystemsDuality}, it is shown that the lattice of transfer systems $\Tr(G)$ is self-dual whenever $G$ is abelian. For example, the duality for $\Tr(C_{p^2})$ is given as follows.
  \[\xymatrix@R=1em{
      \Big(0 & 1 & 2\Big) \ar@{-}[ddddrr] && \Big(0 & 1 & 2\Big)\\
      \Big(0 \ar[r] & 1 & 2\Big) \ar@{-}[drr] && \Big(0 \ar[r] & 1 & 2\Big)\\
      \Big(0 \ar[r] \ar@/^1em/[rr] & 1 & 2\Big) \ar@{-}[urr] && \Big(0 \ar[r] \ar@/^1em/[rr] & 1 & 2\Big)\\
      \Big(0 & 1 \ar[r] & 2\Big) \ar@{-}[rr] && \Big(0 & 1 \ar[r] & 2\Big)\\
      \Big(0 \ar[r] \ar@/^1em/[rr] & 1 \ar[r] & 2 \Big) \ar@{-}[uuuurr] && \Big(0 \ar[r] \ar@/^1em/[rr] & 1 \ar[r] & 2 \Big)
  }\]
  We do not see a clear relation between this notion of duality and the matroid duality.
\end{remark}

%
%
%
%


\newcommand{\etalchar}[1]{$^{#1}$}
\providecommand{\bysame}{\leavevmode\hbox to3em{\hrulefill}\thinspace}
\providecommand{\MR}{\relax\ifhmode\unskip\space\fi MR }
\providecommand{\MRhref}[2]{%
  \href{http://www.ams.org/mathscinet-getitem?mr=#1}{#2}
}
\providecommand{\href}[2]{#2}

\end{document}